\newtheorem{theorem}{Theorem}
\newtheorem{cor}[theorem]{Corollary}
\newtheorem{lem}[theorem]{Lemma}
\newtheorem{defn}[theorem]{Definition}
\newtheorem{theo}[theorem]{Theorem}
\date{}
\journal{ }
\begin{document}

\begin{frontmatter}



\title{Counting closed walks in infinite regular trees using Catalan and Borel's triangles}


\author[inst1]{Lord C. Kavi\corref{cor1}}
\ead{lkavi060@uottawa.ca}
\cortext[cor1]{Corresponding author}
\affiliation[inst1]{organization={Department of Mathematics, University of Ottawa},
            city={Ottawa},
            state={ON},
            country={Canada}}

\author[inst1]{Michael W. Newman}
\ead{mnewman@uottawa.ca}

\begin{abstract} 
We count the number of closed walks on a vertex in a regular tree using  the Catalan's triangle and also  the Borel's triangle, showing another combinatorial structure counted by these two array of numbers. 
\end{abstract}



\begin{keyword}
Trees \sep Closed walks \sep Catalan numbers \sep Catalan's triangles \sep Borel's triangles


\MSC[2010]  05A19 \sep 05A10 \sep 05C30

\end{keyword}

\end{frontmatter}


\section{Introduction}
Let $G$ be an infinite $\delta$-regular tree. What is the number of closed walks of length $2n$, $n\in \mathbb{N}$ that starts and ends at vertex $v\in V(G)$? A a well-known result uses generating function (see \cite{rowland2009number} and all the references therein). It was shown that the generating function is
\[f_\delta(t)=\frac{2(\delta-1)}{\delta-2+\delta\sqrt{1-4(\delta-1)t^2}}.\] 
Our new result gives a combinatorial alternative approach.  We relate the number of closed walks to the Catalan's triangle and also the Borel's triangles. The Borel's triangle is an array of numbers that are closely related to the Catalan numbers and has recently appeared in several studies in commutative algebra, combinatorics and discrete geometry, Cambrian Hopf algebras \cite{chatel2017cambrian}, quantum physics \cite{lakshminarayan2014diagonal} and permutation patterns \cite{remmel2014consecutive}. Cai and Yan \cite{cai2019counting} studied some classes of objects that are counted by Borel's triangle and characterized
their combinatorial structures. We find no study that presents an application of Borel's triangle and Catalan's triangle in solving the well-known closed walk counting problem. We do so in this paper. 

\section{Preliminaries and setting up the problem}
Recall that in the Catalan's triangle, $C_{n,k}$ counts the number of lattice paths in the coordinate plane from $(0, 0)$ to $(n, k)$ that do not go above the line $y = x$. Explicitly,
	\begin{align*}
	    C_{n,k} = \frac{n-k+1}{n+1}\binom{n+k}{n}.
	\end{align*}
Catalan's triangles are the sequences $A009766$ on the On-line Encyclopedia of Integer Sequences (OEIS) \cite{sloane}.
The entries of $C_{n,k}$ for  values of $n$ and $k$ with $0\le n,k\le 7$, are listed below.
\[\begin{tabular}{|c|c|c|c|c|c|c|c|c|}
	\hline 
	n$\backslash$ k & 0 & 1 & 2 & 3 & 4 & 5 & 6 & 7 \\ 
	\hline 
	0 & 1 &  &  &  &  &  &  &  \\ 
	\hline 
	1 & 1 & 1 &  &  &  &  &  &  \\ 
	\hline 
	2 & 1 & 2 & 2 &  &  &  &  &  \\ 
	\hline 
	3 & 1 & 3 & 5 & 5 &  &  &  &  \\ 
	\hline 
	4 & 1 & 4 & 9 & 14 & 14 &  &  &  \\ 
	\hline 
	5 & 1 & 5 & 14 & 28 & 42 & 42 &  &  \\ 
	\hline 
	6 & 1 & 6 & 20 & 48 & 90 & 132 & 132  &  \\ 
	\hline 
	7 &  1 & 7 & 27 & 75 & 165 & 297 & 429 & 429\\
	\hline 
\end{tabular} \]

The Borel's triangle $\{B_{n,k} : 0 \le  k \le  n\}$ on the other hand, is an array of numbers obtained from an invertible transformation
to Catalan's triangle by the equation (see Cai and Yan \cite{cai2019counting})
\begin{align}
    B_{n,k}=\sum_{s=k}^{n}\binom{s}{k}C_{n,s}.\label{cata-borel}
\end{align}
Barry \cite{barry2013note} gave an explicit formula as
\begin{align*}
    B_{n,k}&=\frac{1}{n}\binom{2n+2}{n-k}\binom{n+k}{n}.
\end{align*}
Borel's triangles are the sequences $A234950$ on the On-line Encyclopedia of Integer Sequences (OEIS) \cite{sloane}. The entries of $B_{n,k}$ for  values of $n$ and $k$ with $0\le n,k\le 7$, are listed below.

\[\begin{tabular}{|c|c|c|c|c|c|c|c|c|}
	\hline 
	n$\backslash$ k & 0 & 1 & 2 & 3 & 4 & 5 & 6 & 7 \\ 
	\hline 
	0 & 1 &  &  &  &  &  &  &  \\ 
	\hline 
	1 & 2 & 1 &  &  &  &  &  &  \\ 
	\hline 
	2 & 5 & 6 & 2 &  &  &  &  &  \\ 
	\hline 
	3 & 14 & 28 & 20 & 5 &  &  &  &  \\ 
	\hline 
	4 & 42 & 120 & 135 & 70 & 14 &  &  &  \\ 
	\hline 
	5 & 132 & 495 & 770 & 616 & 252 & 42 &  &  \\ 
	\hline 
	6 & 429 & 2002 & 4004 & 4368 & 2730 & 924 & 132 &  \\ 
	\hline 
	7 &  1430 &  8008 &  19656 &  27300 & 23100 &  11880 & 3432 & 429\\
	\hline 
\end{tabular} \]

	We now set up the problem of interest. Let $G$ be an infinite $\delta$-regular tree.   A finite $\delta$-regular graph of order $m$ with girth greater than $2n$, $n\in \mathbb{N}$ acts as a tree locally, so the results in this article apply to such graphs as well. To find all such closed walks, we suppose $G$ is rooted at $v$.  Any closed walk from the root $v$ can be described uniquely as a sequence of moves away from the root ($R$) and towards the root ($L$). Call such a sequence an $RL$-sequence.
	\begin{defn} An $RL$- sequence is said to be {\em balanced} if there are as many $R$'s as $L$'s.
	\end{defn}
	Hence, there are no odd closed walks. 
	\begin{defn}
		A balanced $RL$- sequence is said to be {\em legal} if it has at most as many $L$'s as $R$'s at any point in the sequence.
	\end{defn}
	Thus, a closed walk from the root $v$ is a balanced legal $RL$-sequence. A balanced legal $RL$-sequence of length $2n$ can be considered as a Dyck path of length $2n$ (or semi-length $n$).
	\begin{defn}
		A {\em component} of an $RL$-sequence $S$ is formed when the sequence touches the root vertex $v$.  The first component  starts from the first $R$ from $v$ to the first $L$ that touches $v$. The second component  starts from the second $R$ move from $v$ to the second $L$ to that touches $v$, and so on.
	\end{defn} 
	The following is then immediate.
	\begin{lem}\label{decom}
		Every balanced legal $RL$-sequence is a sequence of its components.\qed
	\end{lem}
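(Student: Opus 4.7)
The plan is to induct on the semi-length $n$ of the balanced legal $RL$-sequence $S$. The base case $n=0$ is the empty sequence, which vacuously decomposes into zero components. For the inductive step, I would identify the first component with the shortest nonempty prefix $P$ of $S$ in which the number of $R$'s equals the number of $L$'s. The strategy is then to verify three things: (i) such a prefix $P$ exists; (ii) $P$ is itself a balanced legal sequence that corresponds to exactly one component in the sense of the definition; and (iii) writing $S = PQ$, the suffix $Q$ is again a balanced legal sequence, so the inductive hypothesis applies to it.

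For (i), since $S$ is balanced, at least one nonempty prefix (namely $S$ itself) has matching $R$- and $L$-counts, so a shortest such prefix exists. For (ii), because $P$ is the \emph{shortest} prefix with matching counts, no strictly interior prefix of $P$ has matching counts; translated back to the walk on the tree, this says the walk returns to $v$ only at the very end of $P$, which is exactly the defining property of the first component. Legality of $P$ is inherited from the legality of $S$. For (iii), legality of $S$ together with the balance of $P$ forces every prefix of $Q$ to have at least as many $R$'s as $L$'s, so $Q$ is legal; and $Q$ is balanced because both $S$ and $P$ are. Since $Q$ has strictly smaller semi-length than $S$, the induction hypothesis decomposes $Q$ into a sequence of components, and prepending $P$ yields the claimed decomposition of $S$.

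The main (and really only) obstacle is a notational one: making rigorous the dictionary between \emph{the walk on $G$ touches the root $v$} in the tree picture and \emph{the running tally of $R$'s minus $L$'s equals zero} in the $RL$-sequence picture. Once this dictionary is in place, the result is essentially the classical first-return decomposition of Dyck paths, which is presumably why the authors flag it as immediate with a $\qed$.
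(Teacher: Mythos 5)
Your proof is correct and is precisely the first-return decomposition that the paper's definition of ``component'' already encodes; the paper states the lemma as immediate with no written proof, and your induction on the shortest balanced prefix is the natural formalization of that same idea. No gaps.
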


	\section{Main Results}
		From henceforth, a sequence shall mean a balanced legal sequence.
	Let $\mathcal{S}_{n,k}$ be the set of $RL$-sequences of length $2n$ with $k$ components, and $S_{n,k}=|\mathcal{S}_{n,k}|.$
	\begin{lem}
		\label{lemm1}
		Let $n,k\in \mathbb{Z}^+, k\le n.$ The number of  sequences of length $2n$ with exactly $k$ components  is equal to the number of sequences of length $2(n-1)$ with at least $k-1$ components. That is, 
		\[S_{n,k}=\sum_{j=k-1}^{n-1}S_{n-1,j}.\]
	\end{lem}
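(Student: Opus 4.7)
The plan is to produce an explicit bijection
\[
\phi\colon \mathcal{S}_{n,k} \longrightarrow \bigsqcup_{j=k-1}^{n-1}\mathcal{S}_{n-1,j}.
\]
Given $s \in \mathcal{S}_{n,k}$, the definition of a component forces the first component of $s$ to have the form $R\,D\,L$, where $D$ is itself a balanced legal sequence (possibly empty). Writing $s = R\,D\,L \cdot T$, with $T$ the concatenation of the remaining $k-1$ components guaranteed by Lemma~\ref{decom}, I set
\[
\phi(s) = D \cdot T.
\]
This has length $2(n-1)$, and since $D$ and $T$ are each balanced and legal, so is their concatenation.

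Next I count the components of $\phi(s)$. If $D$ has $m$ components (allowing $m=0$), then by Lemma~\ref{decom} the concatenation $D \cdot T$ has exactly $m + (k-1)$ components. Setting $j = m + k - 1$ gives $j \ge k-1$; moreover, since each of the other $k-1$ components contributes at least $1$ to the half-length of $s$, the first component has half-length $\ell \le n-k+1$, and hence $m \le \ell - 1 \le n - k$, giving $j \le n-1$. Thus $\phi(s) \in \mathcal{S}_{n-1,j}$ for some $j$ with $k-1 \le j \le n-1$.

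To invert $\phi$, given $s' \in \mathcal{S}_{n-1,j}$ with $k-1 \le j \le n-1$, I split $s'$ into the concatenation $D$ of its first $j-k+1$ components and the concatenation $T$ of its remaining $k-1$ components (possible precisely because $j \ge k-1$), and define
\[
\psi(s') = R\,D\,L \cdot T.
\]
Since $D$ is balanced and legal, the block $R\,D\,L$ stays strictly above the root in its interior and returns to the root only at its final letter, so it is a single component; hence $\psi(s') \in \mathcal{S}_{n,k}$. The identities $\phi\circ\psi = \mathrm{id}$ and $\psi\circ\phi = \mathrm{id}$ then follow directly from the constructions.

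The step I expect to require the most care is the bookkeeping of components, particularly the edge cases $D=\emptyset$ (the first component of $s$ is just $RL$, corresponding to $j = k-1$) and $T = \emptyset$ (the case $k=1$, where $s$ is itself a single large component). Beyond these, everything reduces to the characterization of components as returns to the root, which is already packaged in Lemma~\ref{decom}, so the bijection itself should be mechanical to verify once the cases are laid out.
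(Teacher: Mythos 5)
Your proof is correct and follows essentially the same route as the paper: you delete the outer $R\ldots L$ pair of one component (you always take the first; the paper's deletion map $f_i$ does the same for an arbitrary fixed $i$th component) and invert by wrapping an initial block of components in a new $R\ldots L$. Your component-count bookkeeping, including the bounds $k-1\le j\le n-1$ and the edge cases $D=\emptyset$ and $k=1$, checks out.
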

	\begin{proof} We define a deletion function $f_{i}$. The deletion function $f_{i}$ removes a pair $RL$ that forms the initial$(R)$ and terminal$(L)$ letters of the $i$th component of a sequence. 
		Let $\omega \in \mathcal{S}_{n,k}$. A sequence $\alpha\in \mathcal{S}_{n-1,j},$ for $ k-1\leq j \leq n-1$ can be achieved by applying a deletion function $f_{i}$ to $\omega$. 
		
		We note the following observations.
		\begin{itemize}
			\item[i.] $f_{i}(\omega) \in \mathcal{S}_{n-1,k-1}$ if the $i$th component consists of only $RL$. Otherwise:
			\item[ii.] $f_{i}(\omega) \in \mathcal{S}_{n-1,j}$, $k\leq j \leq n-1$.
		\end{itemize}
		We show that for fixed $i$, $f_{i}$ is injective.\\
		Suppose  $\omega_1, \omega_2 \in \mathcal{S}_{n,k}$ and $\omega_1 =A_1A_2\dots A_{k}\neq B_1B_2\dots B_{k}=\omega_2$, where $A_j$ and $B_j$  are components for all $j\in [1,k]$, but $f_{i}(\omega_1)=f_{i}(\omega_2)$. Then, we have  $$f_{i}(\omega_1)=A_1A_2\dots A_{i-1}\bar{A}_iA_{i+1}\dots A_{k}$$ and $$f_{i}(\omega_2)=B_1B_2\dots B_{i-1}\bar{B}_iB_{i+1}\dots B_{k},$$ where $\bar{A}_i$ and $\bar{B}_i$ are some legal sequences. So $f_{i}(\omega_1)=f_{i}(\omega_2)$ implies $A_j=B_j \quad \forall j\in [1,k]\backslash \{i\}$ and $\bar{A}_i=\bar{B}_i$. We note that the deleted terms of component $i$ in each of $\omega_1$ and $\omega_2$  are $R$ and $L$. Thus, $A_i=R\bar{A}_iL$ and $B_i=R\bar{B}_iL$. But since $\bar{A}_i=\bar{B}_i$, then $A_i=B_i$ which necessarily implies $\omega_1=\omega_2$. Thus, proving injectivity of $f_i$ for a fixed $i$. 
		\\
		\noindent
		Now, we show for $i$ fixed, $f_{i}$ is surjective.\\
		Given $\alpha\in \mathcal{S}_{n-1,j}, k-1\leq j \leq n-1$, we construct an $\omega\in \mathcal{S}_{n,k}$ as follows:\\ First we decompose $\alpha$ into its components, say $\alpha=C_1C_2C_3\dots C_j$. Now consider the components from $i$ through to the $(j-k+i)$th component of $\alpha$, that is $C_iC_{i+1}\dots C_{j-k+i}$, call it $\varphi$. Now place $R$ and $L$ in front and behind $\varphi$ respectively and call it $C_i^*$. Thus $C_i^*= RC_iC_{i+1}\dots C_{j-k+i}L$. Note that $C_i^*$ is a single component.  Then set $\omega= C_1C_2\dots C_{i-1}C_i^* \underbrace{C_{(j-k+i+1)}\dots C_{j}}_{k-i}$.
		Thus for every $\alpha=C_1C_2C_3\dots C_j$, there exists $$ \omega =C_1C_2\dots C_{i-1}C_i^* \underbrace{C_{(j-k+i+1)}\dots C_{j}}_{k-i}$$ such that $$f_{i}(\alpha)=f_{i}(C_1C_2\dots C_{i-1}C_i^* C_{(j-k+i+1)}\dots C_{j})=C_1C_2C_3\dots C_j.$$	Hence there is a bijection $f_{i}:\mathcal{S}_{n,k}\rightarrow \cup_{j=k-1}^{n-1}\mathcal{S}_{n-1,j}$, which then implies the claim.
	\end{proof}
	Thus the number of balanced sequences of length $2n$ with $k$ components is the  sum of the number of balanced sequences of length $2(n-1)$ with at least $k-1$ components.

	Using Lemma~\ref{lemm1}, we have the following theorem.	
	\begin{theo}\label{closedwalks}
		Let $G$ be an infinite $\delta$-regular tree.  The number of closed walks of length $2n$ at a vertex $v$ of $G$ is
		\begin{eqnarray}
			W_{2n} =
			\sum_{k=1}^n\left[\delta^{k}(\delta-1)^{n-k} \sum_{j \geq k-1}S_{n-1,j}\right]. \label{maineqn}
		\end{eqnarray}
	\end{theo}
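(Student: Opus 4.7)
The plan is to condition on the number of components. Every closed walk at $v$ corresponds to a unique balanced legal $RL$-sequence of length $2n$, which by Lemma~\ref{decom} decomposes into components. So I would first count closed walks that correspond to a given sequence with $k$ components, and then sum over sequences.

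The key observation is a vertex-by-vertex branching count. When the walk makes an $L$-move, the direction is forced (there is exactly one parent), contributing a factor of $1$. When it makes an $R$-move, the number of choices depends on where the walk currently is: from the root $v$, all $\delta$ neighbors are available, but from any non-root vertex only $\delta-1$ children are available, since one of the $\delta$ neighbors is the parent. Within a component, only the first $R$ is performed at the root; all subsequent $R$'s in that component occur at non-root vertices (the walk is strictly above $v$ until the component closes). Hence in a sequence with $k$ components, exactly $k$ of the $n$ total $R$-moves are at the root and the remaining $n-k$ are at non-root vertices. This gives a weight of $\delta^{k}(\delta-1)^{n-k}$ for each such sequence, independent of which legal sequence is chosen.

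Consequently, summing over the $S_{n,k}$ sequences with $k$ components and over $k$,
\begin{equation*}
W_{2n} \;=\; \sum_{k=1}^{n} S_{n,k}\,\delta^{k}(\delta-1)^{n-k}.
\end{equation*}
Applying Lemma~\ref{lemm1} to replace $S_{n,k}$ by $\sum_{j=k-1}^{n-1} S_{n-1,j}$ yields the stated formula.

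The only nontrivial step is the branching count, and the main thing to justify carefully is the claim that the $R$-moves after the first one in a component all happen at non-root vertices. This follows directly from the definition of a component: the walk leaves $v$ at the initial $R$ of the component and does not return to $v$ until the terminal $L$, so every intermediate move, in particular every intermediate $R$, occurs at a vertex different from $v$ and therefore has $\delta-1$ options. Everything else is bookkeeping and a single application of Lemma~\ref{lemm1}.
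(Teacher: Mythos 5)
Your proposal is correct and follows essentially the same route as the paper: condition on the number of components, observe that each $R$-move from the root has $\delta$ choices while each $R$-move elsewhere has $\delta-1$ and each $L$-move is forced, obtain the weight $\delta^{k}(\delta-1)^{n-k}$ per sequence with $k$ components, and finish by applying Lemma~\ref{lemm1}. Your explicit justification that only the first $R$ of each component occurs at the root is a welcome addition that the paper leaves implicit.
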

	\begin{proof} By Lemma~\ref{decom}, the closed walks of length $2n$ can be decomposed into balanced legal sequences of the various number of components, $k=1,\dots,n$. In a sequence, an $R$ move starting at $v$ has $\delta$ possibilities while an $R$ move at any other vertex has $\delta-1$ possibilities but an $L$ move is completely determined since $G$ is a tree. Hence a  sequence with $k$ components has $\delta^{k} (\delta-1)^{n-k}$ possibilities. Hence by Lemma~\ref{lemm1}, there are $\delta^{k} (\delta-1)^{n-k}\sum_{j \geq k-1}S_{n-1,j}$ such sequences with $k$ components. But $k$ runs from $1$ through $n$, so, we have the desired result.
		
	\end{proof}
	\begin{cor}
		Let $G$ be a finite $\delta$-regular graph of order $m$. Suppose $G$ has girth greater than $2n\in \mathbb{Z}$. Then the number of closed walks of length $2n$ at a vertex $v$ in $G$ is $W_{2n}$ as in Equation~(\ref{maineqn}).
		\qed
	\end{cor}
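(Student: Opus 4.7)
The plan is to reduce the finite case directly to Theorem~\ref{closedwalks} via the universal cover. Let $\tilde{G}$ be the universal cover of $G$; since $G$ is $\delta$-regular, $\tilde{G}$ is the infinite $\delta$-regular tree. Let $p\colon\tilde{G}\to G$ be the covering map and fix a lift $\tilde{v}$ of $v$. The path-lifting property of covering maps sets up a length-preserving bijection between walks in $G$ of length $2n$ starting at $v$ and walks in $\tilde{G}$ of length $2n$ starting at $\tilde{v}$. A walk in $G$ is closed at $v$ if and only if its lift in $\tilde{G}$ terminates at a vertex of the fiber $p^{-1}(v)$.

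The role of the girth hypothesis is to force $\tilde{v}$ to be the only element of $p^{-1}(v)$ within distance $2n$ of itself in $\tilde{G}$. Indeed, suppose $\tilde{w}\in p^{-1}(v)$ with $\tilde{w}\ne\tilde{v}$ and $d_{\tilde{G}}(\tilde{v},\tilde{w})\le 2n$. The unique geodesic from $\tilde{v}$ to $\tilde{w}$ in the tree is non-backtracking, and because $p$ is a local isomorphism on edge-stars it projects to a non-backtracking closed walk at $v$ in $G$ of length at most $2n$. Any non-backtracking closed walk contains a cycle of length at most its own length, contradicting $\mathrm{girth}(G)>2n$. Combining this with the first paragraph, closed walks of length $2n$ at $v$ in $G$ correspond bijectively to closed walks of length $2n$ at $\tilde{v}$ in $\tilde{G}$, and Theorem~\ref{closedwalks} gives the count $W_{2n}$.

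The main obstacle is the verification in the previous paragraph: one must check that the projection under $p$ preserves the non-backtracking property and that every closed non-backtracking walk forces a cycle of no greater length. Both are standard facts for graph coverings, but care is needed: a naive attempt to argue that the ball $B_n(v)$ in $G$ is itself a tree can fail, because the bound $\mathrm{girth}(G)>2n$ does not rule out \emph{long} cycles lying inside $B_n(v)$. It is precisely the lift-based, non-backtracking formulation that isolates exactly those walks of length $2n$ while leaving such long cycles irrelevant. Once this subtlety is handled, the counting is inherited verbatim from the tree argument, since the per-step multiplicities $\delta$ (at $v$) and $\delta-1$ (elsewhere) appearing in the proof of Theorem~\ref{closedwalks} are local and depend only on the degree sequence, which is identical in $G$ and $\tilde{G}$.
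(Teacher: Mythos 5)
Your proof is correct. The paper itself offers no argument for this corollary beyond the remark in Section~2 that a $\delta$-regular graph of girth greater than $2n$ ``acts as a tree locally,'' so your covering-space argument is not so much a different route as a rigorous completion of the route the paper gestures at: the universal cover $\tilde{G}$ is the infinite $\delta$-regular tree, path-lifting is a length-preserving bijection on walks from $v$, and the girth hypothesis (via the fact that a geodesic in the tree projects to a non-backtracking closed walk, which must contain a cycle of at most its own length) guarantees that $\tilde{v}$ is the unique element of $p^{-1}(v)$ within distance $2n$, so closedness is preserved in both directions. Your side remark is also well taken: girth greater than $2n$ does \emph{not} make the ball $B_n(v)$ a tree (a $(2n+1)$-cycle through $v$ lies entirely in $B_n(v)$), so the lift-based formulation, which only tracks where a length-$2n$ walk can terminate, is genuinely needed rather than a pedantic flourish. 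The one point worth spelling out if you write this up is the lemma that a closed non-backtracking walk of positive length $\ell$ forces a cycle of length at most $\ell$ (e.g.\ the subgraph it spans is connected on at most $\ell$ vertices and cannot be a tree, since a non-backtracking walk in a tree is a simple path); with that in place the count is inherited verbatim from Theorem~\ref{closedwalks}, exactly as you say.
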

	\noindent
	We note that for $n>0$, $ \sum_{j \geq 0}S_{n-1,j}= \sum_{j \geq 1}S_{n-1,j}=C_{n-1}$, the $(n-1)$th Catalan number and so the $n$th Catalan number, $C_n$ is the sum of number of balanced sequences of length $2n$ with at least $1$ component. We summarize this in the corollary that follows. 
	\begin{cor} The $n$th Catalan number, $C_n$, for $n>0$, is given by
		\begin{eqnarray}\label{cn}
		C_n&=& \sum_{j = 1}^{n}S_{n,j}=  \sum_{k=1}^{n} \sum_{j \geq k-1}S_{n-1,j}.
		\end{eqnarray} \qed
	\end{cor}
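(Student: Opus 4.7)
The plan is to derive the corollary directly from Lemma~\ref{lemm1} together with the observation, already made in the paper, that balanced legal $RL$-sequences of length $2n$ are precisely Dyck paths of semi-length $n$ and are therefore counted by $C_n$.

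First I would establish the leftmost equality $C_n = \sum_{j=1}^n S_{n,j}$ by partitioning the set of balanced legal $RL$-sequences of length $2n$ according to the number of components. Every such nonempty sequence has at least one component (since $n>0$) and at most $n$ components (because each component contributes at least one $R$ and one $L$, hence at least two letters). Thus the sets $\mathcal{S}_{n,j}$ for $1\le j\le n$ are pairwise disjoint and their union is the full set of Dyck paths of semi-length $n$. Taking cardinalities gives the first equality.

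For the second equality, I would plug Lemma~\ref{lemm1} into the first equality: since $S_{n,k} = \sum_{j=k-1}^{n-1} S_{n-1,j}$ for each $k$ with $1 \le k \le n$, summing over $k$ yields
\[
C_n \;=\; \sum_{k=1}^{n} S_{n,k} \;=\; \sum_{k=1}^{n}\sum_{j \geq k-1} S_{n-1,j},
\]
which is exactly the claimed identity.

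There is no real obstacle here; the result is an immediate consequence of Lemma~\ref{lemm1} and the Dyck-path interpretation. The only point requiring care is bookkeeping: checking that the range of $j$ in the inner sum matches the statement of Lemma~\ref{lemm1} (namely $k-1 \le j \le n-1$), and noting that the upper bound $n$ on the outer sum is correct because $\mathcal{S}_{n,j}$ is empty for $j>n$.
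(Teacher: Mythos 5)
Your proposal is correct and matches the paper's own (largely implicit) argument: the first equality comes from partitioning the Dyck paths of semi-length $n$ by their number of components, and the second follows immediately by substituting Lemma~\ref{lemm1} into each term of the outer sum, exactly as the paper indicates in the sentences surrounding the corollary. No differences worth noting.
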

The second equality in Equation~(\ref{cn}) comes  directly from using Lemma~\ref{lemm1}.	
	
	The following result by Lubotzky et al. \cite{lubotzky1988ramanujan} follows as a consequence of Theorem~\ref{closedwalks}. See also \cite{dsouza2013combinatorial}.
	\begin{cor}
		Let $G$ be an infinite $\delta$-regular tree.	The number of walks of length $2n$ in $G$ that start at $v$ and end at $v$ for the first time is
		\begin{align*}
		W_{2n} &=\delta(\delta-1)^{n-1} \sum_{j \geq 0}S_{n-1,j}\\
		&=\delta(\delta-1)^{n-1}C_{n-1}.
		\end{align*}
		\begin{proof}The result follows from the fact that such a walk contains just one component, $k=1$.
		\end{proof}
	\end{cor}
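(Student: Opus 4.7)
The plan is to extract this corollary directly from Theorem~\ref{closedwalks} by isolating the contribution of a single component and then invoking the Catalan identification already recorded.

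First, I would argue that a closed walk of length $2n$ which returns to $v$ for the first time at step $2n$ corresponds precisely to a balanced legal $RL$-sequence with exactly one component. By the definition of a component, a sequence has more than one component if and only if it touches $v$ at some intermediate step; hence the first-return walks are exactly those encoded by sequences in $\mathcal{S}_{n,k}$ with $k=1$, together with the tree-edge choices that turn a symbolic $RL$-sequence into an actual walk.

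Next, I would specialize formula~(\ref{maineqn}) to the single term $k=1$. In the proof of Theorem~\ref{closedwalks}, the factor $\delta^{k}(\delta-1)^{n-k}$ accounts for the fact that an $R$-step from $v$ has $\delta$ choices while every other $R$-step has $\delta-1$ choices (and each $L$-step is forced). For $k=1$ this factor becomes $\delta(\delta-1)^{n-1}$, and by Lemma~\ref{lemm1} the number of one-component sequences of length $2n$ equals $\sum_{j\geq 0}S_{n-1,j}$, giving
\[
W_{2n}^{\mathrm{first}} \;=\; \delta(\delta-1)^{n-1}\sum_{j\geq 0}S_{n-1,j}.
\]

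Finally, I would apply the observation recorded just before the statement of the corollary, namely $\sum_{j\geq 0}S_{n-1,j}=C_{n-1}$, which holds because balanced legal $RL$-sequences of semi-length $n-1$ are exactly Dyck paths and are therefore counted by the Catalan number $C_{n-1}$. This produces the closed form $\delta(\delta-1)^{n-1}C_{n-1}$. There is no substantive obstacle: the only point needing care is the identification between first-return walks and one-component sequences, which is immediate from the definition of a component but worth stating explicitly before invoking the theorem.
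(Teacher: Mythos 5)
Your proposal is correct and follows essentially the same route as the paper: the published proof simply observes that a first-return walk is a one-component sequence and reads off the $k=1$ term of Theorem~\ref{closedwalks}, then applies the identity $\sum_{j\geq 0}S_{n-1,j}=C_{n-1}$ noted just before the corollary. You have merely spelled out these same steps in more detail, which is fine.
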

	We can get similar result if we seek closed walks that touch the vertex exactly twice, that is, we have exactly two components.
		\begin{cor}
		Let $G$ be an infinite $\delta$-regular tree.	The number of walks of length $2n$ in $G$ that start at $v$ and end at $v$ after touching it the second time is
		\begin{align*}
			W_{2n} &=\delta^2(\delta-1)^{n-2}C_{n-1}.
		\end{align*}
		\begin{proof}The result follows from the fact that such a walk contains two components, $k=2$. And using the fact that $ \sum_{j \geq 0}S_{n-1,j}= \sum_{j \geq 1}S_{n-1,j}=C_{n-1}$ yields the desired result. 
		\end{proof}
	\end{cor}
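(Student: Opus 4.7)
The plan is to specialize Theorem~\ref{closedwalks} to the single value $k=2$. First I would identify that the walks described in the statement, those that return to $v$ for the second time at the end of the walk, correspond precisely to balanced legal $RL$-sequences with exactly two components, using the definition of a component as a maximal excursion away from $v$ starting with an $R$ and ending with the first subsequent $L$ back at $v$.

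Next I would reuse the counting argument from the proof of Theorem~\ref{closedwalks}. Each component begins with an $R$-move from the root $v$, for which there are $\delta$ choices, while the remaining away-from-root moves occur at non-root vertices and each contributes $\delta-1$ choices; all $L$-moves are forced by the tree structure. With $k=2$ components and $n$ total $R$-moves in a length-$2n$ walk, this gives a weighting of $\delta^2(\delta-1)^{n-2}$ per sequence shape. Multiplying by the number of shape-classes, which by Lemma~\ref{lemm1} equals $\sum_{j \geq 1} S_{n-1,j}$, would yield $\delta^2(\delta-1)^{n-2}\sum_{j \geq 1} S_{n-1,j}$.

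Finally I would close the argument by invoking the identity $\sum_{j \geq 1} S_{n-1,j} = C_{n-1}$ noted just before the corollary, which holds since balanced legal $RL$-sequences of length $2(n-1)$ are in bijection with Dyck paths of semi-length $n-1$. The main obstacle, if any, is simply verifying that ``touching $v$ the second time'' matches the two-component condition; once that matching is established, the conclusion is an immediate specialization of Theorem~\ref{closedwalks}.
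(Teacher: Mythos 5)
Your proposal is correct and follows essentially the same route as the paper: identify the walks with two-component sequences, apply the $k=2$ term of Theorem~\ref{closedwalks} to get the weight $\delta^2(\delta-1)^{n-2}$, and use Lemma~\ref{lemm1} together with $\sum_{j\geq 1}S_{n-1,j}=C_{n-1}$ to count the sequences. No gaps.
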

	
	We can say a bit more. The following result is due to Cai and Yan \cite{cai2019counting}.
	
	\begin{theo}[\cite{cai2019counting}]The entry $C_{n,k}$ of Catalan's triangle counts  
	Dyck paths of semi-length $n + 1$ that have $k$ up-steps (or down-steps) not at ground level. Equivalently, it is the set of Dyck paths of semi-length $n+1$ with $n+1-k$ returns to the $x$-axis (not counting the starting point $(0, 0)$).
	    
	\end{theo}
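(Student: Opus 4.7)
The plan is to recast the statement in the combinatorial framework of Section~3 and then induct using Lemma~\ref{lemm1}. First I would check that the two descriptions in the theorem coincide: in any Dyck path of semi-length $n+1$, every up-step at height $0$ initiates an arch (a maximal subpath meeting the $x$-axis only at its endpoints), and every arch terminates at a return to the $x$-axis. Hence the number of ground-level up-steps equals the number of arches equals the number of returns to the axis excluding the starting point. Since the path has $n+1$ up-steps in total, having $k$ of them above ground level is exactly the same as having $n+1-k$ arches, or equivalently $n+1-k$ returns; that verifies the ``Equivalently'' clause.

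In the paper's terminology, a Dyck path of semi-length $n+1$ with $n+1-k$ arches is a balanced legal $RL$-sequence of length $2(n+1)$ with $n+1-k$ components, so the task reduces to proving $S_{n+1,\,n+1-k}=C_{n,k}$. Applying Lemma~\ref{lemm1} with $n\mapsto n+1$ and $k\mapsto n+1-k$, then reindexing by $i=n-j$, gives
\[
S_{n+1,\,n+1-k}\;=\;\sum_{j=n-k}^{n}S_{n,j}\;=\;\sum_{i=0}^{k}S_{n,\,n-i}.
\]

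Next I would induct on $n$. The base case $n=0$ is immediate, since the unique Dyck path of semi-length $1$ is $UD$, giving $S_{1,1}=1=C_{0,0}$. Assuming the inductive hypothesis $S_{n,\,n-i}=C_{n-1,i}$ for $0\le i\le n$ (with the convention $C_{n-1,n}=0$), the displayed sum collapses to the row-sum identity for Catalan's triangle,
\[
\sum_{i=0}^{k}C_{n-1,i}\;=\;C_{n,k},
\]
which I would derive by telescoping the Catalan-triangle recurrence $C_{n,k}=C_{n,k-1}+C_{n-1,k}$ (equivalently $C_{n-1,i}=C_{n,i}-C_{n,i-1}$) together with the boundary values $C_{n,0}=C_{n-1,0}=1$.

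The main obstacle is establishing the recurrence $C_{n,k}=C_{n,k-1}+C_{n-1,k}$ from the closed form $C_{n,k}=\tfrac{n-k+1}{n+1}\binom{n+k}{n}$; this amounts to a short binomial identity (essentially Pascal's rule applied after clearing the prefactor $\tfrac{n-k+1}{n+1}$), and it is the only genuine calculation in the argument. Once it is in hand, the telescoping identity, and hence the theorem, follow cleanly from Lemma~\ref{lemm1}.
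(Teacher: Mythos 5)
Your argument is correct, but it is worth noting that the paper does not actually prove this statement at all: it is imported wholesale from Cai and Yan via the citation, and the paper's own combinatorial identity $C_{n-1,n-k}=\sum_{j\ge k-1}S_{n-1,j}$ (Corollary~\ref{lemma2}) is then \emph{deduced} from it by comparing two expressions for $W_{2n}$. You go the other way: after correctly matching the two phrasings (ground-level up-steps $\leftrightarrow$ arches $\leftrightarrow$ returns, so $k$ non-ground-level up-steps means $n+1-k$ components), you prove $S_{n+1,n+1-k}=C_{n,k}$ directly by induction, using Lemma~\ref{lemm1} to get $S_{n+1,n+1-k}=\sum_{i=0}^{k}S_{n,n-i}$ and then the partial-row-sum identity $\sum_{i=0}^{k}C_{n-1,i}=C_{n,k}$, which telescopes from the Catalan-triangle recurrence $C_{n,k}=C_{n,k-1}+C_{n-1,k}$. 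This makes the whole chain self-contained (no appeal to Cai--Yan) and delivers Corollary~\ref{lemma2} as a byproduct rather than a consequence, since your induction yields both the ``exactly $n+1-k$ components'' and ``at least'' formulations simultaneously. The only loose ends are routine: verifying the recurrence from the closed form $C_{n,k}=\frac{n-k+1}{n+1}\binom{n+k}{n}$ (a short Pascal-type manipulation, valid for $0<k<n$ with the boundary cases $C_{n,0}=1$ and $C_{n,n}=C_{n,n-1}$ checked separately), and confirming that your convention $C_{n-1,n}:=0$ is consistent with the telescoping at $i=n$ --- it is, precisely because $C_{n,n}=C_{n,n-1}$. I see no gap.
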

 Thus, $C_{n,k}$ counts the RL sequences of length $2(n+1)$ with $n+1-k$ components. 

We have then that, $C_{n-1,n-k}$ counts the  RL sequences of length $2n$ with $k$ components. Thus, there are $\delta^k(\delta-1)^{n-k}C_{n-1,n-k}$ closed walks of length $2n$ with $k$ components (or that returns to vertex $v$ exactly $k$ times). But since $k$ runs from $1$ through to $n$, we have the following result which gives the number of closed walks in terms of the Catalan's triangles. 
	
	\begin{theo}\label{walkscatalan}
		Let $G$ be an infinite $\delta$-regular tree (or a finite $\delta$-regular graph of order $m$ with girth greater than $2n$).  The number of closed walks of length $2n$ at a vertex $v$ of $G$ is
		\begin{eqnarray}
			W_{2n} =
			\sum_{k=1}^n\delta^{k}(\delta-1)^{n-k}C_{n-1,n-k}, \label{eqncat}
		\end{eqnarray}
		where $C_{n,k}$ is the Catalan's triangle. \qed
	\end{theo}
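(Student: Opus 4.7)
The plan is to obtain this theorem essentially as a repackaging of Theorem~\ref{closedwalks} once the inner sum is identified combinatorially. Starting from
\[W_{2n} = \sum_{k=1}^{n}\delta^{k}(\delta-1)^{n-k}\sum_{j\geq k-1}S_{n-1,j},\]
the first step is to collapse the inner sum using Lemma~\ref{lemm1}, which states precisely that $S_{n,k}=\sum_{j=k-1}^{n-1}S_{n-1,j}$. After this substitution the formula becomes
\[W_{2n} = \sum_{k=1}^{n}\delta^{k}(\delta-1)^{n-k}S_{n,k},\]
so the remaining task is to identify $S_{n,k}$ with a Catalan-triangle entry.

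For this I would invoke the Cai--Yan theorem quoted just above the statement: $C_{N,K}$ counts Dyck paths of semi-length $N+1$ having $N+1-K$ returns to the $x$-axis. Translating between the Dyck-path language and the $RL$-sequence language used in the paper, "return to the $x$-axis" coincides with the notion of a \emph{component} introduced in the setup, so $C_{N,K}$ is precisely the number of balanced legal $RL$-sequences of length $2(N+1)$ with $N+1-K$ components. Matching parameters by solving $N+1=n$ and $N+1-K=k$ yields $N=n-1$ and $K=n-k$, hence $S_{n,k}=C_{n-1,n-k}$. Plugging this into the displayed expression for $W_{2n}$ gives exactly \eqref{eqncat}.

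The argument is essentially bookkeeping, and the only point to check is that all indices remain in the admissible range $0\le K\le N$ of the Catalan triangle. For $k\in\{1,\dots,n\}$ one has $K=n-k\in\{0,\dots,n-1\}$ and $N=n-1$, so $0\le K\le N$ holds throughout and every term is well defined. Thus I would expect no real obstacle beyond writing down the index-matching cleanly; indeed the author signals this by closing the statement with \qed.
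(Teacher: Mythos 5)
Your proposal is correct and follows essentially the same route as the paper: the key step in both is the identification $S_{n,k}=C_{n-1,n-k}$ via the quoted Cai--Yan theorem (returns to the $x$-axis $=$ components), combined with the weight $\delta^{k}(\delta-1)^{n-k}$ per $k$-component sequence. The only cosmetic difference is that the paper counts the $k$-component walks directly as $\delta^{k}(\delta-1)^{n-k}S_{n,k}$, whereas you reach the same expression by starting from Theorem~\ref{closedwalks} and undoing Lemma~\ref{lemm1}; this detour is harmless and the index bookkeeping you check is exactly right.
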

	
	Now comparing Theorem~\ref{closedwalks} and Theorem~\ref{walkscatalan}, we can deduce another combinatorial interpretation of the $(n-1,n-k)$ entry of the Catalan's triangle. 
	\begin{cor}\label{lemma2} In the Catalan's triangle,
	$C_{n-1,n-k} $ counts the number of
 $RL$ sequences of length $2(n-1)$ with at least $k-1$ components. Equivalently, 
	    it counts Dyck paths of semi-length $n-1$ with at least $k-1$ returns to the $x$-axis (not counting the starting point (0,0)).
	
	That is, 
	    \begin{align*}
	    C_{n-1,n-k} =  \sum_{j \geq k-1}S_{n-1,j}.
	\end{align*}
	\end{cor}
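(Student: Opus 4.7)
The plan is to combine the Cai--Yan theorem stated immediately above with Lemma~\ref{lemm1}. Applying that theorem with indices $(n-1,n-k)$ in place of $(n,k)$, the entry $C_{n-1,n-k}$ counts Dyck paths of semi-length $n$ with $(n-1)+1-(n-k)=k$ returns to the $x$-axis. Under the standard bijection between Dyck paths of semi-length $n$ and balanced legal $RL$-sequences of length $2n$, a return to the $x$-axis corresponds precisely to the end of a component, so $C_{n-1,n-k}=S_{n,k}$. Lemma~\ref{lemm1} then gives $S_{n,k}=\sum_{j\geq k-1}S_{n-1,j}$, which is the claimed identity; the Dyck-path reformulation is immediate from the same bijection applied at semi-length $n-1$.

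A second route, closer to the hint in the text, is to compare Theorem~\ref{closedwalks} with Theorem~\ref{walkscatalan}. Both expressions for $W_{2n}$ agree for every integer $\delta\geq 2$, since both count closed walks of length $2n$ in the infinite $\delta$-regular tree, and both are polynomials in $\delta$ of degree $n$. Agreeing on infinitely many values forces them to coincide as polynomials in $\delta$, so subtracting yields
$$\sum_{k=1}^{n}\delta^{k}(\delta-1)^{n-k}\Bigl(C_{n-1,n-k}-\sum_{j\geq k-1}S_{n-1,j}\Bigr)=0$$
as an identity in $\mathbb{Q}[\delta]$. To conclude term by term, I would verify that $\{\delta^{k}(\delta-1)^{n-k}\}_{k=1}^{n}$ is linearly independent in $\mathbb{Q}[\delta]$. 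This follows by a short induction: evaluating at $\delta=1$ kills every term except the $k=n$ one, forcing its coefficient to vanish; factoring out one power of $(\delta-1)$ then reduces the problem to $n-1$ in place of $n$.

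The main technical point is really just bookkeeping, namely tracking the index shift $(n,k)\mapsto(n-1,n-k)$ between the Cai--Yan indexing (up-steps and returns of Dyck paths) and the indexing by components used here. Once that is straight, both approaches are short; no genuinely new combinatorial input is needed beyond Lemma~\ref{lemm1} and the Cai--Yan theorem, and I would present the first approach in the final writeup for maximum transparency.
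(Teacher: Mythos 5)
Your proposal is correct and both of your routes amount to the paper's own argument: the first route simply makes explicit the chain the paper already sets up, namely the identification $C_{n-1,n-k}=S_{n,k}$ from the Cai--Yan theorem (stated in the text just before Theorem~\ref{walkscatalan}) combined with Lemma~\ref{lemm1}, while the second route is the paper's literal ``compare Theorem~\ref{closedwalks} with Theorem~\ref{walkscatalan}'' step. Your only genuine addition is the linear-independence argument for $\{\delta^{k}(\delta-1)^{n-k}\}_{k=1}^{n}$, which the paper leaves implicit but which is needed if one insists on the polynomial-comparison route; your choice to present the first, direct route is the cleaner option.
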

	
Now, recall from Equation~\ref{cata-borel}, we have
 \begin{align*}
    B_{n,k}=\sum_{s=k}^{n}\binom{s}{k}C_{n,s}.
\end{align*}
Thus, we can express the number of closed walks at a vertex in terms of Borel's triangle as well.

\begin{theo}\label{closedwalksBorel}
	Let $G$ be an infinite $\delta$-regular tree, (or a finite $\delta$-regular graph of order $m$ with girth greater than $2n$).  The number of closed walks of length $2n$ at a vertex $v$ of $G$ is
	\begin{eqnarray*}
		W_{2n} &=& \sum_{\ell=1}^{n}(-1)^{n-\ell} B_{n-1,n-\ell}\,\delta^{\ell},\\
		&=& \sum_{\ell=0}^{n-1}(-1)^\ell B_{n-1,\ell}\,\delta^{n-\ell},
	\end{eqnarray*} where $B_{n,k}$ is Borel's triangle.
\end{theo}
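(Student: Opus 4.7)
The plan is to start from Theorem~\ref{walkscatalan}, which expresses $W_{2n}$ in terms of Catalan's triangle, and then use the defining identity (\ref{cata-borel}) that converts Catalan's triangle into Borel's triangle. The key algebraic move is to expand the factor $(\delta-1)^{n-k}$ via the binomial theorem so that only pure powers of $\delta$ appear, and then to swap the order of summation so the inner sum matches the shape $\sum_s \binom{s}{k} C_{n,s}$ that defines $B_{n,k}$.

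More concretely, I would write
\[
W_{2n} \;=\; \sum_{k=1}^{n} \delta^{k} (\delta-1)^{n-k} C_{n-1,n-k} \;=\; \sum_{k=1}^{n} \delta^{k} C_{n-1,n-k} \sum_{i=0}^{n-k} \binom{n-k}{i} (-1)^{n-k-i} \delta^{i},
\]
then introduce the variable $\ell = k + i$ (so $\ell$ ranges from $1$ to $n$ and, for fixed $\ell$, $k$ ranges from $1$ to $\ell$). Collecting the power $\delta^{\ell}$ and the sign $(-1)^{n-\ell}$ outside, the surviving inner sum is
\[
\sum_{k=1}^{\ell} \binom{n-k}{\ell-k} C_{n-1,n-k}.
\]
Substituting $m = n-k$ rewrites this as $\sum_{m=n-\ell}^{n-1} \binom{m}{n-\ell} C_{n-1,m}$, which by (\ref{cata-borel}) is precisely $B_{n-1,n-\ell}$. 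This yields the first stated form
\[
W_{2n} \;=\; \sum_{\ell=1}^{n} (-1)^{n-\ell} B_{n-1,n-\ell}\, \delta^{\ell},
\]
and the second form follows immediately by the reindexing $\ell \mapsto n - \ell$ (the $\ell = n$ term drops out because $B_{n-1,n} = 0$, matching the lower limit $\ell = 0$ in the second sum).

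I do not expect a genuine obstacle here: the only subtlety is the bookkeeping in swapping the order of summation and checking the index ranges carefully so that the inner sum is exactly of the form appearing in (\ref{cata-borel}). In particular, one should verify that the lower limit $k = 1$ (rather than $k = 0$) in the outer sum does not introduce a spurious term, which is fine because the absent $k = 0$ contribution would correspond to walks with zero components, i.e.\ the trivial walk, and this boundary term can be checked directly or simply absorbed by the observation that $C_{n-1,n} = 0$. Once these indices are aligned, the identity is immediate from the definition of $B_{n,k}$.
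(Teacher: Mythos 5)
Your proposal is correct and follows essentially the same route as the paper: extract the coefficient of $\delta^\ell$ from Theorem~\ref{walkscatalan} by binomial expansion of $(\delta-1)^{n-k}$, reindex so the inner sum becomes $\sum_{s=n-\ell}^{n-1}\binom{s}{n-\ell}C_{n-1,s}=B_{n-1,n-\ell}$ via Equation~(\ref{cata-borel}). One small quibble: the second displayed form is obtained from the first by the pure relabeling $\ell\mapsto n-\ell$, under which no term ``drops out'' (the $\ell=n$ term of the first sum is the nonzero leading term $B_{n-1,0}\delta^{n}$, which becomes the $\ell=0$ term of the second sum).
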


\begin{proof}
Consider the coefficient of $\delta^\ell$ in Equation~(\ref{eqncat}). That is,
\
\begin{eqnarray*}
    [\delta^\ell]W_{2n}&=&[\delta^\ell]\sum_{k=1}^n\delta^{k}(\delta-1)^{n-k}C_{n-1,n-k}\\
    &=& [\delta^\ell]\sum_{k=1}^\ell \delta^{k}(\delta-1)^{n-k}C_{n-1,n-k}\\
    &=& [\delta^{\ell-k}]\sum_{k=1}^\ell (\delta-1)^{n-k}C_{n-1,n-k}\\
    &=& [\delta^{\ell-k}]\sum_{k=1}^\ell \sum_{i=0}^{n-k}\binom{n-k}{i}\delta^{n-k-i}(-1)^i C_{n-1,n-k}\\
    &=& \sum_{k=1}^\ell \binom{n-k}{n-\ell}(-1)^{n-\ell} C_{n-1,n-k}\\
    &=& (-1)^{n-\ell}\sum_{k=1}^\ell \binom{n-k}{n-\ell} C_{n-1,n-k}\\
    &=& (-1)^{n-\ell}\sum_{s=n-\ell}^{n-1}\binom{s}{n-\ell} C_{n-1,s}\\
    &=& (-1)^{n-\ell} B_{n-1,n-\ell}.
\end{eqnarray*}
Now, since $\ell$ runs from $1$ through to $n$, we have 
\begin{align*}
    W_{2n}= \sum_{l=1}^{n} (-1)^{n-\ell} B_{n-1,n-\ell}\delta^\ell.
\end{align*}
\end{proof}	
\subsection{Examples}
We end this note with the following examples. Let $G$ be a $\delta$-regular infinite tree (or a finite $\delta$-regular graph of order $m$ with girth greater than $2n$). Then the number of closed walks of length $2n$ centred at a vertex $v\in G$ for $n=1,2,\dots,6$ are given in the table below.
\newline	
\fontsize{11}{11}\selectfont

	\begin{tabular}{|c|c|} 
		\hline 
		\multicolumn{2}{|c|}{For length $2$} \\ 
		\hline 
		& $\delta\times \mathbf{1}$  \\  
		\hline 
		$W_2=$ & $\delta$ \\
			\hline 
		
		    \hline
		\multicolumn{2}{|c|}{For length $4$} \\ 
		\hline 
		& $\delta^2\times \mathbf{1}$  \\ 
		\hline 
		& $\delta(\delta-1)\times \mathbf{1}$ \\ 
		\hline 
		$W_4=$ & $\mathbf{2\delta^2-\delta}$ \\ 
			\hline 
		
		    \hline 
		\multicolumn{2}{|c|}{For length 6} \\ 
		\hline 
		& $\delta^3\times \mathbf{1}$ \\ 
		\hline 
		& $\delta^2(\delta-1)\times \mathbf{2}$ \\ 
		\hline 
		& $\delta(\delta-1)^2\times \mathbf{2}$ \\ 
	\hline 
		$W_6=$ & $\mathbf{5\delta^3-6\delta^2+2\delta}$ \\ 
			\hline 
		
		    \hline
		\multicolumn{2}{|c|}{For length 8} \\ 
		\hline 
		& $\delta^4 \times \mathbf{1} $\\ 
		\hline 
		& $\delta^3(\delta-1)\times \mathbf{3}$ \\ 
		\hline 
		& $\delta^2(\delta-1)^2\times \mathbf{5} $\\ 
		\hline 
		& $\delta(\delta-1)^3\times \mathbf{5}$\\ 
		\hline
		$W_8=$ & $\mathbf{14\delta^4-28\delta^3+20\delta^2-5\delta}$ \\ 
		\hline 
			 
			\hline 
		\multicolumn{2}{|c|}{For length 10} \\ 
		\hline 
		& $\delta^5\times \mathbf{1}$ \\ 
		\hline 
		&$ \delta^4(\delta-1)\times \mathbf{4}$ \\ 
		\hline 
		& $\delta^3(\delta-1)^2\times \mathbf{9}$ \\ 
		\hline 
		& $\delta^2(\delta-1)^3\times \mathbf{14}$ \\ 
		\hline 
		& $\delta(\delta-1)^4\times \mathbf{14} $\\ 
		\hline 
		$W_{10}=$ & $\mathbf{42\delta^5-120\delta^4+135\delta^3-70\delta^2+14\delta}$ \\ 
		\hline 
		
		    \hline
		\multicolumn{2}{|c|}{For length 12} \\ 
		\hline 
		& $\delta^6\times \mathbf{1}$ \\ 
		\hline 
		& $\delta^5(\delta-1)\times \mathbf{5}$ \\ 
		\hline 
		& $\delta^4(\delta-1)^2\times \mathbf{14}$ \\ 
		\hline 
		& $\delta^3(\delta-1)^3\times \mathbf{28}$ \\ 
		\hline 
		&$ \delta^2(\delta-1)^4\times\mathbf{ 42}$ \\ 
		\hline 
		& $\delta(\delta-1)^5\times \mathbf{42} $\\ 
		\hline 
		$W_{12}=$ & $\mathbf{132\delta^6- 495\delta^5+770\delta^4-616\delta^3+252\delta^2-42\delta}$ \\ 
		\hline 		
	\end{tabular}
\normalsize

\section{Acknowledgement}
Research is supported by NSERC Canada.

\bibliographystyle{plain}
\bibliography{references}



\end{document}